\theoremstyle{definition}
\newtheorem{theorem}{Theorem}[section]
\newcommand{\IN}{\hbox{$\mathbb{N}$}}
\newcommand{\IZ}{\hbox{$\mathbb{Z}$}}
\newcommand{\IQ}{\hbox{$\mathbb{Q}$}}
\renewcommand{\hat}{\widehat}
\def\it{\itshape}
\def\tt{\texttt}
\def\bf{\textbf}
\def\IZ{{\mathbb{Z}}}
\def\IQ{{\mathbb{Q}}}
\def\IN{{\mathbb{N}}}
\def\I1{{\mathbb{1}}}
\def\limx0{\lim_{x \to 0}}
\def\intxyleq1{\underset{\| x - y  \| \leq 1}{\int}}
\def\intxygeq1{\underset{\| x - y  \| \geq 1}{\int}}
\def\intxizetaleq1{\underset{\| \xi - \zeta  \| \leq 1}{\int}}
\def\intxizetageq1{\underset{\| \xi - \zeta \| \geq 1}{\int}}
\def\tab{\hskip 1mm}
\def\tab{\hspace{.1pc}}
\def\ttab{\hspace{1pc}}
\newcounter{hours}
\newcounter{minutes}
\newcommand\printtime{%
  \setcounter{hours}{\the\time/60}%
  \setcounter{minutes}{\the\time-\value{hours}*60}%
  \ifthenelse{\value{hours} > 12}
     {
       \setcounter{hours}{\value{hours}-12}%
       \thehours:\theminutes \ p.m.                
     }
     {
       \thehours:\theminutes \ a.m.                
     } 
}
\def\putdate{{\tt Compiled on \the\month-\the\day-\the\year \ at\printtime} \\}
\begin{document} 

\title{Irrational Base Counting}
 \author{Avraham Bourla\\
   Department of Mathematics\\
  Brandeis University, Waltham MA\\
   \texttt{abourla@brandeis.edu}}
 \date{\today}
 \maketitle
\begin{abstract}
\noindent We will provide algorithmic implementation with proofs of existence and uniqueness for the absolute and alternating irrational base numeration systems.
\end{abstract} 

\section{Introduction}{}

We can view a positive integer written in our familiar base--10 numeration system as the dot product of a finite sequence of digits $(d_k)_1^\ell \subset \{0,1,...,9\}$ and the infinite base--$10$ vector $(10^k)_0^\infty$ truncated to the $\ell-1$ position. For instance when $\ell=3$ and $(d_k:=k)_1^3$, we have
\[\sum_{k=1}^\ell{d_k}10^{k-1} = (1,2,3)\cdot(10^k)_0^2 = 1\cdot{10}^0 + 2\cdot10^1+3\cdot10^2 =321.\]
After taking zero as the vacuous expansion obtained when $\ell=0$ and allowing the infinite base--$10$ vector to alternate in sign as $((-10)^k)_0^\infty$, we can expand all integers base--$(-10)$. For instance, $-321 = (9,3,7,1)\cdot((-10)^k)_0^3$, whereas $321$ is now given the new digit representation $(1,8,4)$. We can similarly obtain integer expansions for all fix radix base--n systems. In this paper, we how show how to expand integers as a dot product using an irrational base. The idea behind these expansions date back to Ostrowski \cite{Ost}, who used the continued fraction expansion as a tool in inhomogeneous Diophantine Approximation.\\

\noindent After fixing the base $\alpha \in (0,1)\backslash\IQ$, we expand it as an infinite continued fraction
\[\alpha = \dfrac{1}{a_1 + \dfrac{1}{a_2 + \dfrac{1}{a_3+\dfrac{1}{\ttab\ddots}}}},\]
obtaining the unique sequence of partial quotients $(a_k)_1^\infty$ (for details refer to any of the standard introductions \cite{HW,K}). Truncating the iteration after $k$ steps yields the convergent
\[\frac{p_k}{q_k} := \dfrac{1}{a_1 + \dfrac{1}{a_2 +\dfrac{1}{\ddots \begin{matrix}\\ +\dfrac{1}{a_k} \end{matrix}}}}.\]
We will utilize the sequence of denominators $(q_k)_0^\infty$ as the infinite base--$(\alpha)$ vector and the alternating sequence $((-1)^k{q_k})_0^\infty$ as the base--$(-\alpha)$ vector, providing rigorous proofs of existence as well as concrete algorithmic realization and some counting examples. We end this section by quoting the well known recursion equation
\begin{equation}\label{q_k_recursion}
q_{-1} := 0, \ttab q_0 :=1,\ttab q_k =  a_k{q_{k-1}} + q_{k-2} \ttab k \ge 1.
\end{equation}
After we define
\begin{equation}\label{q_k^*}
q_k^* := (-1)^k{q_k}, \ttab k \ge -1,
\end{equation}
we use this relationship to obtain the new recursion equation 
\begin{equation}\label{q_k^*_recursion}
q_{-1}^* := 0, \ttab q_0 := 1, \ttab q_k^* = q_{k-2}^*-a_k{q_{k-1}}^*, \ttab k \ge 1.
\end{equation}

\section{The Base--$\alpha$ Expansion}

\subsection{Algorithm and proof}
The base--$\alpha$ expansion is of the dot product of the sequence of digits $(c_k)_1^\ell$, where $\ell \in \IN$ and the infinite sequence $(q_k)_0^\infty$ truncated to the $\ell-1$ position. We say that the digit sequence $(c_k)_1^\infty\subset\IN$ is $\boldsymbol{\alpha}$\bf{--admissible} when it satisfies the following Markov conditions:
\begin{itemize}
\item $c_1 \le a_1 -1$ and $c_k \le a_k$ for $k \ge 1$, not all zeros.
\item If $c_k = a_k$ then $c_{k-1}=0$.
\end{itemize}
\begin{theorem}\label{base_N}
For every $N \in \IN$ there exists $\ell\ge0$ and a unique $\alpha$--admissible sequence of digits $(c_k)_1^\ell$ such that $N = \sum_{k=1}^{\ell}{c_k}q_{k-1}$. 
\end{theorem}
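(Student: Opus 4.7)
The plan is to prove both existence and uniqueness by first establishing a single key bound: for any finite sequence $(c_k)_1^\ell$ of nonnegative integers satisfying the Markov conditions, the partial sum $S_\ell := \sum_{k=1}^\ell c_k q_{k-1}$ satisfies $S_\ell \leq q_\ell - 1$. Once this estimate is available, the theorem will follow from a short greedy argument.

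I would prove the bound by induction on $\ell$. The case $\ell = 0$ is vacuous, and the case $\ell = 1$ is immediate from the first Markov condition: $S_1 = c_1 \leq a_1 - 1 = q_1 - 1$. For the inductive step, split on the value of $c_\ell$. If $c_\ell \leq a_\ell - 1$, combining the inductive hypothesis $S_{\ell-1} \leq q_{\ell-1} - 1$ with the recursion (\ref{q_k_recursion}) gives $S_\ell \leq (a_\ell - 1)q_{\ell-1} + q_{\ell-1} - 1 = a_\ell q_{\ell-1} - 1 < q_\ell$. If instead $c_\ell = a_\ell$, then admissibility forces $c_{\ell-1} = 0$, so $S_\ell = a_\ell q_{\ell-1} + S_{\ell-2}$, and applying the inductive hypothesis at stage $\ell-2$ yields $S_\ell \leq a_\ell q_{\ell-1} + q_{\ell-2} - 1 = q_\ell - 1$.

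For existence, given $N \geq 1$, I would let $\ell \geq 1$ be the unique index with $q_{\ell-1} \leq N < q_\ell$ (well-defined because $q_k \to \infty$), set $c_\ell := \lfloor N/q_{\ell-1} \rfloor$, and iterate the procedure on $N - c_\ell q_{\ell-1}$. The estimate $N < q_\ell = a_\ell q_{\ell-1} + q_{\ell-2}$ immediately gives $c_\ell \leq a_\ell$, and if equality $c_\ell = a_\ell$ holds then the remainder is less than $q_{\ell-2}$, so the next iteration selects an index at most $\ell - 2$ and $c_{\ell-1}$ is automatically $0$, which is exactly the second Markov constraint. For uniqueness, suppose $N$ has two admissible representations with nonzero leading digit. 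The key bound together with the lower bound $S_\ell \geq c_\ell q_{\ell-1} \geq q_{\ell-1}$ pins down $\ell$ as the unique index satisfying $q_{\ell-1} \leq N < q_\ell$; then $c_\ell$ is forced to equal $\lfloor N/q_{\ell-1} \rfloor$ because the remainder $S_{\ell-1}$ lies strictly below $q_{\ell-1}$; induction on $N$ completes the argument. The main obstacle will be the delicate case split in the key bound, since the extremal case $c_\ell = a_\ell$ is precisely where the second Markov condition must be invoked to prevent $S_\ell$ from overshooting $q_\ell$.
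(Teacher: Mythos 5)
Your proof is correct. The existence half follows the same greedy scheme as the paper's Algorithm~\ref{abs} (choose $\ell$ with $q_{\ell-1}\le N<q_\ell$, take $c_\ell=\lfloor N/q_{\ell-1}\rfloor$, recurse on the remainder, and verify the Markov conditions by essentially the same estimates the paper uses), but your uniqueness argument is genuinely different and in fact sharper. The paper disposes of uniqueness in one sentence by invoking the uniqueness of quotient and remainder in Euclidean division; that only shows the algorithm's output is well defined, not that no \emph{other} admissible digit string could represent $N$. Your key lemma --- that every admissible $(c_k)_1^\ell$ satisfies $\sum_{k=1}^\ell c_k q_{k-1}\le q_\ell-1$, proved by induction with the case split on whether $c_\ell=a_\ell$, where the second Markov condition forces $c_{\ell-1}=0$ and the recursion \eqref{q_k_recursion} gives exactly $q_\ell-1$ --- is precisely the ingredient that closes this gap: it pins down the leading index of \emph{any} admissible representation as the unique $\ell$ with $q_{\ell-1}\le N<q_\ell$, then forces the leading digit to be $\lfloor N/q_{\ell-1}\rfloor$, so every admissible representation must coincide with the greedy one. (In the extremal case $c_\ell=a_\ell$ your induction invokes the hypothesis at stage $\ell-2$; note that the truncated string may be identically zero, but the bound $0\le q_{\ell-2}-1$ still holds since $q_{\ell-2}\ge 1$.) The only small omission is in the existence step: your general bound only yields $c_\ell\le a_\ell$, whereas the first Markov condition demands $c_1\le a_1-1$; this does hold, because when the greedy procedure reaches index $1$ the remainder satisfies $N_m<q_1=a_1$ and $q_0=1$, so $c_1=N_m\le a_1-1$, but you should say so explicitly as the paper does.
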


\begin{proof}

\noindent Apply the algorithm:\\

\IncMargin{1em}
\begin{algorithm}[H]
\SetKwInOut{Input}{input}\SetKwInOut{Output}{output}
\Input{$\alpha\in(0,1)\backslash\IQ,\tab N\in\IN_{\ge 0}$}
\Output{$\ell \in \IN, (c_k)_1^\ell \text{ }\alpha$--admissible}
set $N_0 := N, m=n_0:= 0$\;
\While{$N_m \ge 1$}
{
let $n_m$ be such that $q_{n_m-1} \le N_m < q_{n_m}$\;
set $c_{n_m} := \lfloor N_m/{q_{n_m-1}}\rfloor$\;
set $N_{m+1} := N_m - c_{n_m}q_{n_m-1}$\;
set $m:=m+1$\;
}
set $M := m, \ell := n_0, c_k := 0$ for all $k \notin \{n_m\}_0^M$\;
\caption{Natural Expansion}\label{abs}
\end{algorithm}\DecMargin{1em}
\vspace{1pc}

\noindent When $N=0$, we have $\ell=n_0=0$ and the expansion is vacuous. Whenever $N_m \ge 1$, we see that since $q_0=1$ by definition \eqref{q_k_recursion}, the assignment of step--3 and the step--4 guarantees that $n_m \ge 1$ and that
\begin{equation}\label{c_n_m>0}
c_{n_m}\ge 1.
\end{equation}
After we rewrite the assignment of line--4 as the inequality 
\begin{equation}\label{c_n_m}
c_{n_m}q_{n_m-1} \le N_m < (c_{n_m}+1)q_{n_m-1},
\end{equation}
we observe that, in tandem with the assignment of line-5, we are applying the euclidean algorithm as the repeated integer division of $N_m$ by $q_{n_m-1}$ resulting in a quotient $c_{n_m}$ and remainder $N_{m+1}$. Thus we must have $0 \le N_{m+1} < N_m \le N$, that is, this iteration scheme must eventually terminate with a finite positive value $M$, yielding the sequences 
\[0= N_M < N_{M-1} < ... < N_0 = N, \ttab 0 \le n_M < ... < n_1 < n_0 = \ell \ttab \text{ and } \ttab (c_{n_m})_{m=0}^{M-1}.\] 
For all $1 \le k \le \ell$ with $k \notin \{n_m\}_0^{M-1}$ we define $c_k : =0$ and then, using the assignment of step--6, we obtain the desired expansion
\[N = N_0 = c_{n_0}q_{n_0-1} + N_1 =  c_{n_0}q_{n_0-1} + c_{n_1}q_{n_1-1} + N_2 = ... = \sum_{m=0}^{M-1}c_{n_m}q_{n_m-1} = \sum_{k=1}^{\ell}c_k{q_{k-1}}.\]
Furthermore, the uniqueness of the quotient and the remainder terms in the division algorithm guarantees the uniqueness of this expansion.\\

\noindent If $M$ is such that $n_M \ge 2$ then $c_1=0$ and if $n_M=1$, we use the fact that $q_0=1$ and the inequality \eqref{c_n_m} to verify that $c_1 = c_1{q_0} \le N_1 < q_1 = a_1$. Conclude that $c_1 \le a_1-1$ as desired. If for some $m$ we have in step 2 that $c_{n_m} \ge a_{n_m}+1$, then the recursion formula \eqref{q_k_recursion}, the inequality \eqref{c_n_m} and the fact that the sequence $(q_k)_0^\infty$ is strictly increasing will lead us to the contradiction
\[N_m < q_{n_m} = a_{n_m}q_{n_m-1}+q_{n_m-2} < (a_{n_m} + 1)q_{n_m-1} \le c_{n_m}q_{n_m-1} \le N_m.\] 
Therefore, for all $k$ we must have $0 \le c_k \le a_k$. Next, suppose by contradiction that $c_k = a_k$ and $c_{k-1} \ge 1$. Since $c_k=a_k \ge 1$, we see from the inequality \eqref{c_n_m>0} that there is some $m$ for which $n_m=k-1$. The the recursion formula \eqref{q_k_recursion}, the inequality \eqref{c_n_m} and the assignment of line--5 will now leads us to the contradiction
\[N_m < q_{n_m} = q_{k-1} < q_k =  q_k - N_{m+1} + N_{m+1} \le q_k - c_{n_m+1}q_{n_m} + N_{m+1} \le q_k - c_k{q_{k-1}} + N_{m+1}\]
\[=q_k - a_k{q_{k-1}}+ N_{m+1} = q_{k-2}+ N_{m+1} \le c_{k-1}q_{k-2} + N_{m+1} = c_{n_m}q_{n_m-1} + N_{m+1} = N_m.\] 
\end{proof}

\subsection{Examples}

When 
\[\alpha := .5(5^{.5}-1) = \dfrac{1}{1+\dfrac{1}{1+\dfrac{1}{\ddots}}}\]
is the golden section, we have $\{a_k\}_1^\infty = \{1\}$. We then use formula \eqref{q_k_recursion} to verify that the sequence $(q_k)_0^\infty$ is no other than the Fibonacci Sequence $(F_k)_0^\infty := (1,1,2,3,5,8,13,...)$. The implication of the proposition to this case is the Zeckendorf Theorem, which states that every positive integer can be uniquely written as the sum of nonconsecutive terms in $(F_k)_1^\infty$.

\noindent When 
\[\alpha := \sqrt{2}-1 = \dfrac{1}{2+\dfrac{1}{2+\dfrac{1}{\ddots}}}\]
is the sliver section, we have $\{a_k\}_1^\infty=\{2\}$. By formula \eqref{q_k_recursion}, we verify that $(q_k)_0^3 = (1,2,5,12)$. The following tables display how the digits behave when we count to twenty four using this base:\\

\small
\begin{tabular}{|c||c|c|c|c|}
\hline

& $q_3=12$ & $q_2 = 5$ & $q_1 = 2$ & $q_0=1$\\
$N$&  $c_4$ & $c_3$ & $c_2$ & $c_1$\\
\hline\hline
1 & 0 & 0 & 0 & 1 \\
\hline
2 & 0 &0 & 1 & 0\\
\hline
3 &0 & 0 & 1 & 1\\
\hline
4 &0 & 0 & 2 & 0\\
\hline
5 & 0 &1 & 0 & 0\\
\hline
6 &0 & 1 & 0 & 1 \\
\hline
7 &0 & 1 & 1 & 0\\
\hline
8 &0 & 1 & 1 & 1\\
\hline
9 &0 & 1 & 2 & 0\\
\hline
10 &0 & 2 & 0 & 0\\
\hline
11 &  0 & 2 & 0 & 1\\
\hline
12 & 1 & 0 & 0 & 0\\
\hline
\end{tabular}\hspace{2pc}\begin{tabular}{|c||c|c|c|c|c|}
\hline
& $q_3=12$ & $q_2 = 5$ & $q_1 = 2$ & $q_0=1$\\
$N$ & $c_4$ & $c_3$ & $c_2$ & $c_1$\\
\hline\hline
13 & 1 & 0 & 0 & 1\\
\hline
14 & 1 & 0 & 1 & 0\\
\hline
15 & 1 & 0 & 1 & 1\\
\hline
16 & 1 & 0 & 2 & 0\\
\hline
17 & 1 & 1 & 0 & 0\\
\hline
18 & 1 & 1 & 0 & 1\\
\hline
19 & 1 & 1 & 1 & 0\\
\hline
20 & 1 & 1 & 1 & 1\\
\hline
21 & 1 & 1 & 2 & 0\\
\hline
22 & 1 & 2 & 0 & 0\\
\hline
23 & 1 & 2 & 0 & 1\\
\hline
24 & 2 & 0 & 0 & 0\\
\hline
\end{tabular}\hspace{2pc}
\vspace{1pc}

\normalsize

\section{The Base--(-$\alpha)$ Expansion}
\subsection{Algorithm and proof}
The base--$(-\alpha)$ expansion is of the dot product of the sequence of digits $(b_k)_1^\ell$, where $\ell \in \IN$ and the infinite sequence $(q_k^*)_0^\infty$ truncated to the $\ell-1$ position. We say that the digit sequence $(b_k)_1^\infty\subset \IN$ is $\boldsymbol{(-\alpha)}$\bf{--admissible} when:
\begin{itemize}
\item $b_k \le a_k$ not all zeros.
\item If $b_k = a_k$ then $b_{k+1}=0$.
\end{itemize}
\begin{theorem}\label{base_Z}
For every integer $Z$ there is some $\ell\ge0$ and a unique $(-\alpha)$--admissible sequence of digits $(b_k)_1^\ell$ such that $Z = \sum_{k=1}^{\ell}{b_k}q_{k-1}^*$. 
\end{theorem}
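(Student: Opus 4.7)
I plan to mirror the four-step architecture of the proof of Theorem~\ref{base_N}: exhibit an algorithm, prove termination, verify $(-\alpha)$-admissibility of the output, and establish uniqueness. The new features are that $Z$ may be negative and that $(q_k^*)$ alternates in sign, so at each step the algorithm must choose the parity of its leading index to match the sign of the current remainder.

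\textbf{Algorithm.} Initialize $Z_0 := Z$. While $Z_m \neq 0$, let $n_m$ be the smallest positive integer whose parity matches the sign of $Z_m$ (odd when $Z_m > 0$, even when $Z_m < 0$) and for which $Z_m \in I_{n_m}$, where $I_{n_m}$ denotes the set of values of a $(-\alpha)$-admissible length-$n_m$ sum with top digit at least $1$. A routine induction on $\ell$ using \eqref{q_k_recursion} and \eqref{q_k^*_recursion} gives
\[
I_\ell = [\,q_{\ell-2}+1,\; q_\ell\,]\ \ (\ell\text{ odd}), \qquad I_\ell = [\,-(q_\ell - 1),\; -q_{\ell-2}\,]\ \ (\ell\text{ even}),
\]
(with $q_{-1} := 0$), so the $I_\ell$ are disjoint consecutive integer intervals exhausting $\IZ \setminus \{0\}$. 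Given $n_m$, the admissibility coupling ``$b_{n_m-1} = a_{n_m-1} \Rightarrow b_{n_m} = 0$'' applied with $b_{n_m} \geq 1$ restricts the residue $Z_{m+1} := Z_m - b_{n_m} q_{n_m-1}^*$ to a strict subset of $\{0\} \cup \bigcup_{j < n_m} I_j$; a companion induction produces exactly one $b_{n_m} \in \{1, \ldots, a_{n_m}\}$ for which $Z_{m+1}$ lies in that restricted residue set, and moreover $|Z_{m+1}| < q_{n_m-1}$.

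\textbf{Termination, existence, admissibility, and uniqueness.} The bound $|Z_{m+1}| < q_{n_m-1}$ forces $n_{m+1} < n_m$, so the indices strictly decrease and the iteration ends after at most $n_0$ steps with $Z_M = 0$; setting $b_k := 0$ at untouched positions and telescoping (exactly as in the proof of Theorem~\ref{base_N}) produces $Z = \sum_{k=1}^\ell b_k q_{k-1}^*$ with $\ell := n_0$. The bound $b_{n_m} \leq a_{n_m}$ is built into the construction. For the forward Markov condition $b_k = a_k \Rightarrow b_{k+1} = 0$, note that $b_{n_m+1}$ can be nonzero only when some earlier iterate had $n_{m-1} = n_m + 1$ (since $(n_m)$ is strictly decreasing), which forces opposite parities for $n_m$ and $n_{m-1}$ and hence a sign flip $\mathrm{sgn}(Z_m) = -\mathrm{sgn}(Z_{m-1})$; the residue analysis then pins $|Z_m|$ to a sub-range of size at most $q_{n_m} - q_{n_m-1}$, on which the uniquely-determined $b_{n_m}$ must satisfy $b_{n_m} \leq a_{n_m} - 1$. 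Uniqueness is then immediate: the partition of $\IZ \setminus \{0\}$ by $(I_\ell)_{\ell \geq 1}$ forces $\ell$ from $Z$, the sub-interval of $I_\ell$ containing $Z$ forces $b_\ell$, and induction on $\ell$ closes the argument as in the proof of Theorem~\ref{base_N}.

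\textbf{Main obstacle.} The delicate step is verifying the forward Markov condition: unlike the backward condition of Theorem~\ref{base_N}, it looks \emph{ahead} to a slot assigned at an earlier stage of the algorithm, so the contradiction must be extracted from the interaction between saturation ($b_{n_m} = a_{n_m}$) and the sign-flip analysis above. A close second is the careful inductive bookkeeping needed to compute the intervals $I_\ell$ and the restricted residue ranges, because the admissibility conditions couple adjacent indices in both directions.
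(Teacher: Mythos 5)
Your outline is sound and your interval formulas check out (e.g.\ $I_1=[1,q_1]$, $I_2=[-a_2q_1,-1]=[-(q_2-1),-q_0]$, $I_3=[q_1+1,q_3]$, with the blocks $b_\ell q_{\ell-1}^*+R_\ell$ tiling $I_\ell$ exactly and the signed intervals tiling $\IZ\setminus\{0\}$), but your route is genuinely different from the paper's. The paper runs an explicit arithmetic algorithm: it locates $n_m'$ by comparing $|Z_m|+I_{<0}(Z_m)$ against the $q_k$, adjusts parity to get $n_m$, sets $b_{n_m}'=\lfloor|Z_m|/q_{n_m-1}\rfloor$ and conditionally increments it so the next residue is small enough; the digit bounds and the Markov condition are then verified a posteriori by explicit inequality contradictions, and uniqueness is obtained separately by splitting a representation into even- and odd-indexed parts ($Z_0=Z_0^+-Z_0^-$) and reducing to Theorem~\ref{base_N}. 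You instead prove one structural statement --- the value sets $I_\ell$ of exact-length-$\ell$ admissible strings partition $\IZ\setminus\{0\}$ into consecutive intervals, each tiled by the translates indexed by the top digit --- from which existence, uniqueness, and admissibility all fall out of a single strong induction on $\ell$. Your approach buys a self-contained uniqueness proof (the paper's reduction to Theorem~\ref{base_N} leaves implicit why the combined string with digits $b_{2k+1}-\hat b_{2k+1}$ is a legitimate $\alpha$-admissible expansion) and absorbs the sign/parity case analysis into the interval endpoints; the cost is that the two deferred inductions carry essentially all the weight. You correctly locate the hard point: because the Markov condition reads forward, $R_\ell$ must exclude $b_{\ell-1}=a_{\ell-1}$, and your sign-flip argument pinning $|Z_m|$ below $q_{n_m}-q_{n_m-1}=(a_{n_m}-1)q_{n_m-1}+q_{n_m-2}$ when $n_{m-1}=n_m+1$ is the same estimate the paper uses in the first bullet of its Markov-condition verification. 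One bookkeeping point to make explicit when you write this up: $R_\ell$ has to be characterized as the set of integers whose (already proven unique) shorter expansion avoids $b_{\ell-1}=a_{\ell-1}$, so uniqueness at lengths below $\ell$ must be in hand before the tiling of $I_\ell$ can even be stated; the strong induction accommodates this, but the order of the claims matters.
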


\begin{proof}

\noindent We let $I_{R}$ be the indicator function for the relationship $R$ and apply the algorithm:\\

\IncMargin{1em}
\begin{algorithm}[H]
\SetKwInOut{Input}{input}\SetKwInOut{Output}{output}
\Input{$Z \in\IZ$, $\alpha\in(0,1)\backslash\IQ$}
\Output{$\ell \in \IN, (b_k)_1^\ell (-\alpha)$--admissible}
set $Z_0 := Z, m=b_1=n_0 := 0$\;
\While{$Z_m \ne 0$}
{
let $n_m' \ge 0$ be such that $q_{n_m'-1} < |Z_m| + I_{<0}(Z_m) \le q_{n_m'}$\; 
let $n_m \in \{n_m', n_m'+1\}$ be such that $I_{>0}\left((-1)^{n_m-1}Z_m\right) = 1$\; 
\uIf{$n_m=n_m'$}
{
set $b_{n_m}' := \lfloor |Z_m|/q_{n_m-1}\rfloor$\;
\uIf{$|Z_m-b_{n_m}'q_{n_m-1}^*| + I_{<0}(Z_m-b_{n_m}'q_{n_m-1}^*)  > q_{n_m-2}$}
{
set $b_{n_m} := b_{n_m}'+1$\;
}
\Else
{
set $b_{n_m} := b_{n_m}'$\;
}}
\Else
{
set $b_{n_m} := 1$\;
}
set $Z_{m+1} := Z_m - b_{n_m}q_{n_m-1}^*$\; 
set $m := m+1$\;
}
set $M := m, \ell := n_0, b_1 := b_1 + Z_m, b_k := 0$ for all $k \notin \{n_m\}_0^M$\;
\caption{Integer Expansion}\label{alt}
\end{algorithm}\DecMargin{1em}
\vspace{1pc}

\noindent The definition \eqref{q_k^*} of $q_k^*$ and the assignment of line--4 provides us with the inequality
\begin{equation}\label{Z_m_q_n_m-1}
Z_m{q_{n_m-1}}^* = (-1)^{n_m-1}Z_m{q_{n_m-1}} \ge 0,
\end{equation}
whereas the assignment of line--6 provides us with the inequality
\begin{equation}\label{b_n_m}
b_{n_m}'q_{n_m-1} \le |Z_m| < (b_{n_m}'+1)q_{n_m-1}. 
\end{equation}
When $Z_0=0$, we have $\ell=0$ and  the expansion is vacuous. Assuming $Z_0 \ne 0$, we will first show that the sequence of indexes $(n_m)_0^M$ is strictly decreasing. To do so, we will consider the two cases $n_m' \in \{n_m-1,n_m\}$ separately:\\

\noindent $\bullet$ When $n_m'=n_m-1$, the inequality of step 3 yields 
\[q_{n_m-2} = q_{n_m'-1} < |Z_m| \le |Z_m| + I_{<0}(Z_m) \le q_{n_m'} = q_{n_m-1},\]
so when we define $Z_{m+1}$ using $b_{n_m}=1$ in step 15, we will have by the inequalities \eqref{Z_m_q_n_m-1} and \eqref{b_n_m} that 
\begin{equation}\label{opposite_signs}
|Z_m - b_{n_m}'{q_{n_m-1}}^*| = |Z_m| - b_{n_m}'{q_{n_m-1}}
\end{equation}
and that $Z_m{Z_{m+1}} \le 0$, hence
\begin{equation}\label{Z_m+1_case1}
|Z_{m+1}| = q_{n_m-1} - |Z_m| \le q_{n_m-1} - q_{n_m-2} - 1.
\end{equation}
Since $n_m = n_m'+1 \ge 1$, we have $q_{n_m-2} \ge 1$, so that $|Z_{m+1}| + 1 \le q_{n_m-1}$ and
\[|Z_{m+1}| + I_{<0}(Z_{m+1}) \le |Z_{m+1}| + 1 \le q_{n_m-1}.\]
Then in step 3 of the next iteration, we will have $n_{m+1}' \le n_m-1$. If this inequality is strict then we have $n_{m+1} \le n_{m+1}'+1 < n_m$. If $n_{m+1}' = n_m+1$, then in step 4 we use the fact that $Z_m$ and $Z_{m+1}$ are of opposite sign to obtain  
\[I_{>0}\left((-1)^{n_m}Z_{m+1}\right) = I_{>0}\left((-1)^{n_m-1}Z_m\right) = 1 = I_{>0}\left((-1)^{n_{m+1}-1}Z_{m+1}\right),\]
that is, 
\[n_{m+1}'-1 \equiv n_m \equiv n_{m+1}-1 \ttab (\operatorname{mod} 2).\] 
Since $n_{m+1} \le n_{m+1}'+1 \le n_m$, we conclude that for this case we have $n_{m+1} = n_{m+1}' < n_m$.\\

\noindent $\bullet$ When $n_m' = n_m$ and $Z_m > 0$, we have by the inequalities \eqref{Z_m_q_n_m-1}, \eqref{b_n_m}, line--15 and the fact that $0 \le b_{n_m} - b_{n_m}'\le1$ that
\[Z_{m+1} = Z_m - b_{n_m}q_{n_m-1}^* = |Z_m| - b_{n_m}q_{n_m-1} < (b_{n_m}'+1)q_{n_m-1}-b_{n_m}'q_{n_m-1} = q_{n_m-1}\]
and
\[-q_{n_m-1} = b_{n_m}'q_{n_m-1} - (b_{n_m}'+1)q_{n_m-1} \le b_{n_m}'q_{n_m-1} - b_{n_m}q_{n_m-1}\] 
\[\le |Z_m| - b_{n_m}q_{n_m-1} = Z_m - b_{n_m}q_{n_m-1}^* = Z_{m+1}.\]
Similarly, when $n_m' = n_m$ and $Z_m < 0$, we have by the inequalities \eqref{Z_m_q_n_m-1}, \eqref{b_n_m}, line--15 and the fact that $0 \le b_{n_m} - b_{n_m}'\le1$ that
\[Z_{m+1} = Z_m - b_{n_m}q_{n_m-1}^* = -|Z_m| + b_{n_m}q_{n_m-1} \le - b_{n_m}'q_{n_m-1} + (b_{n_m}'+1)q_{n_m-1} = q_{n_m-1}\]
and
\[-q_{n_m-1} = -(b_{n_m}'+1)q_{n_m-1} + b_{n_m}'q_{n_m-1} < -|Z_m| + b_{n_m}'q_{n_m-1} \le Z_m - b_{n_m}q_{n_m-1}^* = Z_{m+1}\]
In either case we have 
\begin{equation}\label{Z_m+1_case2}
|Z_{m+1}| \le q_{n_m-1}.
\end{equation}
If one of the last inequalities is an equality, then the iteration will terminate at the next step with $n_{m+1} = n_m, \tab b_{n_{m+1}}=1$ and $Z_{m+2} = 0$. Otherwise, we have $|Z_{m+1}| + I_{<0}(Z_{m+1}) \le q_{n_m-1}$ so that by line--3 we will have $n_{m+1}' \le n_m-1$. When $n_{m+1} = n_{m+1}'$, we have $n_{m+1} < n_m$ and when $n_{m+1} - 1= n_{m+1}'$ we use the previous paragraph to conclude that $n_{m+2} < n_{m+1}$. In either case we have $n_{m+2} \le  n_{m+1} \le n_m$ and $n_{m+2} < n_m$.\\ 

\noindent We have just proved that the sequence $(n_m)_0^M$ is non-constant and decreasing and thus conclude that this iteration process will eventually terminate with a finite value $M$, for which $n_M \ge 1$ and $Z_{M+1} =0$. After we define $b_k := 0$ whenever $k \notin \{n_m\}_0^M$, we use the assignment of line--15 to obtain the desired expansion 
\[Z_0 =  b_{n_0}q_{n_0-1}^* + Z_1 = b_{n_0}q_{n_0-1}^* + b_{n_1}q_{n_1-1}^* + Z_2 = ... = \sum_{k=1}^{\ell}{b_k}q_{k-1}^*.\]
To prove uniqueness, we split an expansion of $Z_0$ into its positive and negative parts and invoke the uniqueness of the absolute irrational expansion. More precisely, if $Z_0 = \sum_{k=1}^{\ell}{b_k}q_{k-1}^* $, then we define
\[Z_0^+ := \sum_{k=0}^{\lceil\ell/2\rceil}{b_{2k+1}}q_{2k}^* = \sum_{k=0}^{\lceil\ell/2\rceil}{b_{2k+1}}q_{2k}, \ttab Z_0^- := -\sum_{k=1}^{\lceil\ell/2\rceil}{b_{2k}}q_{2k-1}^* = \sum_{k=1}^{\lceil\ell/2\rceil}{b_{2k}}q_{2k-1},\]
so that $Z_0 = Z_0^+ - Z_0^-$. If we also have $Z_0 = \sum_{k=1}^{\hat{\ell}}{\hat{b}_k}q_{k-1}^*$ then, without changing the representation, we set $b_k=\hat{b}_k:=0$ for all $\min\{\ell,\hat{\ell}\} < k \le \max\{\ell,\hat{\ell}\}$ and write
\[\sum_{k=1}^{\lceil\ell/2\rceil}{b_{2k}}q_{2k-1} = Z_0^- = Z_0^+ - Z_0 = \sum_{k=0}^{\lceil{\ell}/2\rceil}{b_{2k+1}}q_{2k}-\sum_{k=1}^{\hat{\ell}}{\hat{b_k}}q_{k-1}^*\]  
\[= \sum_{k=0}^{\lceil{\max\{\ell,\hat{\ell}\}}/2\rceil}{(b_{2k+1}-\hat{b}_{2k+1})}q_{2k}   + \sum_{k=1}^{\lceil{\hat{\ell}}/2\rceil}{\hat{b}_{2k}}q_{2k-1}.\]
Then theorem \ref{base_N} guarantees that $\ell = \hat{\ell}$ and that $b_k = \hat{b}_k$ for all $1 \le k \le \ell$.\\

\noindent To prove that for all $k \ge 1$ we have $b_k \le a_k$, we will show that for all $0\le m \le M$ we have $0 \le b_{n_m} \le a_{n_m}$. This is clear whenever $n_m=n_m'+1$ for by the assignment of line--13, we have $b_{n_m} = 1$. When $n_m=n_m'$, we use the inequality of line--3 and the assignments of line--6, line--8 and line--10, we see that $b_{n_m} \ge b_{n_m}' \ge 1$. Furthermore, we cannot have $b_{n_m}' \ge a_{n_m} + 1$, for then we would use the recursion relationship \eqref{q_k_recursion} and the inequalities of line--3 and \eqref{b_n_m} to obtain the contradiction
\[|Z_m| \le q_{n_m} - I_{<0}(Z_m) \le q_{n_m} = a_{n_m}q_{n_m-1} + q_{n_m-2} \le (b_{n_m}'-1)q_{n_m-1} + q_{n_m-2}\] 
\[= b_{n_m}'q_{n_m-1} - (q_{n_m-1} - q_{n_m-2}) < b_{n_m}'q_{n_m-1} \le |Z_m|. \]
Finally, when $b_{n_m}' = a_{n_m}$, we will show that we must also have $b_{n_m} = a_{n_m}$. If $Z_m > 0$, then from line--4 and the definition \eqref{q_k^*} of $q_k^*$ we have $(-1)^{n_m-1} = 1$ and $q_{n_m-1}^* = q_{n_m-1}$ so that by the inequality \eqref{b_n_m} we obtain 
\[Z_m-b_{n_m}'q_{n_m-1}^* = |Z_m| - b_{n_m}'q_{n_m-1} \ge 0.\] 
Then the the recursion relationship \eqref{q_k_recursion} and the inequality of line--3 will now yield the inequality
\[|Z_m - b_{n_m}'q_{n_m-1}^*| + I_{<0}(Z_m- b_{n_m}'q_{n_m-1}^*) = |Z_m - b_{n_m}'q_{n_m-1}^*|\] 
\[= Z_m - b_{n_m}q_{n_m-1} = Z_m - a_{n_m}q_{n_m-1} \le q_{n_m} - a_{n_m}q_{n_m-1} = q_{n_m-2}. \]
Similarly, if $b_{n_m}' = a_{n_m}$ and $Z_m<0$, then from line--4 we have $(-1)^{n_m-1} < 0$, hence $q_{n_m-1}^* = -q_{n_m-1}$ so that, by the inequality \eqref{b_n_m}, we have  
\[Z_m-b_{n_m}'q_{n_m-1}^* = -|Z_m| + b_{n_m}'q_{n_m-1} \le 0.\] 
Then the recursion relationship \eqref{q_k_recursion} and the inequality of line--3 will yield the inequality
\[|Z_m - b_{n_m}'q_{n_m-1}^*| + I_{<0}(Z_m- b_{n_m}'q_{n_m-1}^*) \le -(Z_m - b_{n_m}'q_{n_m-1}^*) + 1\]
\[=|Z_m| + b_{n_m}'q_{n_m-1}^* + 1 \le q_{n_m} - I_{<0}(Z_m) + b_{n_m}'q_{n_m-1}^* + 1\]
\[ = q_{n_m} - 1 - a_{n_m}q_{n_m-1} + 1 = q_{n_m} - a_{n_m}q_{n_m-1} = q_{n_m-2}.  \]
 In both cases, $b_{n_m}'$ would not satisfy the condition in line--7, hence we would have $b_{n_m} = b_{n_m}' = a_{n_m}$. Since $b_k=0$ whenever $k \notin \{n_m\}_0^M$, we conclude that for all $k$ we have $0 \le b_k \le a_k$.\\

\noindent To prove that $b_k=a_k$ implies that $b_{k+1}=0$, we let $k$ and $m$ are such that $n_m=k+1$. If $n_{m+1} \le k-1$ then $k \notin \{n_m\}_0^{M+1}$, hence $b_k=0 \le a_k-1$ so that we may assume that $n_{m+1} = n_m-1=k$. Again we will consider the two cases $n_m' \in \{n_m-1,n_m\}$ separately:\\ 

\noindent $\bullet$ When $n_m'=n_m-1$, we assume that $b_{k+1} \ge b_{k+1}' \ge 1$ and will prove that $b_k \le a_k-1$. We use the recursion formula \eqref{q_k_recursion}, the fact that the sequence $(q_k)$ is increasing and the inequality \eqref{Z_m+1_case1} to obtain
\[|Z_{m+1}| < q_{n_m-1} - q_{n_m-2} = q_k - q_{k-1} = (a_k-1)q_{k-1} + q_{k-2} < a_k{q_{k-1}}. \]
so when we assign $b_k'=b_{n_m-1}'=b_{n_{m+1}}'$ using the inequality \eqref{b_n_m}, we will have $b_k' \le a_k-1$. Furthermore, from formula \eqref{opposite_signs}, we obtain  
\[|Z_{m+1} - b_k'{q_{k-1}}^*| + I_{<0}(Z_{m+1} - b_k'{q_{k-1}}^*) \le |Z_{m+1}| - b_k'{q_{k-1}}\] 
\[\le |Z_{m+1}| - (a_k-1)q_{k-1} + 1 < q_k-q_{k-1} - (a_k-1)q_{k-1} + 1 = q_k - a_k{q_{k-1}}+1=q_{k-2}+1\]  
so that the condition of line--7 is not satisfied and $b_k := b_k'\le a_k-1$ as desired.\\

\noindent $\bullet$ When $n_m'=n_m$, we have
\[n_{m+2} \le n_m-2 = k-1 < n_{m+1} = n_m-1 = n_m'-1 = k < k+1 = n_m.\]
Suppose by contradiction that $b_k=a_k$ and $b_{k+1} \ge b_{k+1}' \ge 1$. Then by the recursion relationship \eqref{q_k_recursion}, the inequalities \eqref{b_n_m}, \eqref{Z_m+1_case2} and the assignment of line--15, we obtain the contradiction
\[q_k \le b_{k+1}'q_k = b_{n_m}'q_{n_m-1} \le |Z_m| = |b_{n_m}q_{n_m-1}^* + Z_{m+1}| = |b_{n_m}q_{n_m-1} - Z_{m+1}| \]
\[= |b_{n_m}q_{n_m-1} - b_{n_m-1}q_{n_m-2} + Z_{m+2}| \le b_{n_m}q_{n_m-1} - b_{n_m-1}q_{n_m-2} + |Z_{m+2}|\]
\[<  b_{n_m}q_{n_m-1}- b_{n_m-1}q_{n_m-2} +  (b_{n_{m+2}-1}+1)q_{n_{m+2}-2} = b_k{q_{k-1}} - b_{k-1}q_{k-2} + (b_{k-1}+1)q_{k-2} \]
\[= a_k{q_{k-1}} - b_{k-1}q_{k-2} + (b_{k-1}+1)q_{k-2} = q_k - (b_{k-1}+1)q_{k-2} + (b_{k-1}+1)q_{k-2} = q_k. \]
\end{proof}

\subsection{Examples}

When $\alpha$ is the golden section, we have $(q_k^*)_0^\infty := (1,-1,2,-3,5,...)$ and are able to extend Zeckendorf's Theorem to the integers. When $\alpha$ is the silver section, we have $(q_k^*)_0^\infty = (1,-2,5,-12,29,...)$. The following tables displays how the digits behave when counting from -24 to 24 using this base:\\

\noindent 
\small
\begin{tabular}{|c||c|c|c|}
\hline
 & $q_2^*$=$5$ & $q_1^*$=$-2$ & $q_0^*$=$1$\\ 
Z &    $b_3$ & $b_2$ & $b_1$\\
\hline\hline
1 & 0 & 0 & 1 \\
\hline
2  &0 & 0 & 2\\
\hline
3 & 1 & 1 & 0\\
\hline
4 & 1 & 1 & 1\\
\hline
5  & 1 & 0 & 0\\
\hline
6 & 1 & 0 & 1 \\
\hline
7 & 1 & 0 & 2\\
\hline
8 & 2 & 1 & 0\\
\hline
9 & 2 & 1 & 1\\
\hline
10 & 2 & 0 & 0\\
\hline
11  & 2 & 0 & 1\\
\hline
12 & 2 & 0 & 2\\
\hline
\end{tabular}\hspace{5pc}\begin{tabular}{|c||c|c|c|c|c|c|}
\hline
& $q_4^*$=$29$ & $q_3^*$=$-12$ & $q_2^*$=$5$ & $q_1^*$=$-2$ & $q_0^*$=$1$\\
Z & $b_5$ & $b_4$ & $b_3$ & $b_2$ & $b_1$\\
\hline\hline
13 &1& 1 & 0 & 2 & 0 \\
\hline
14 &1& 1 &0 & 2 & 1\\
\hline
15 &1&1 & 0 & 1 & 0\\
\hline
16 &1&1 & 0 & 1 & 1\\
\hline
17 &1& 1 & 0 & 0 & 0\\
\hline
18 &1&1 & 0 & 0 & 1 \\
\hline
19 &1&1 & 0& 0 & 2\\
\hline
20 &1&1 & 1 & 1 & 0\\
\hline
21 &1&1 & 1 & 1 & 1\\
\hline
22&1 &1 & 1 & 0 & 0\\
\hline
23&1 &  1 & 1 & 0 & 1\\
\hline
24&1 & 1 & 1 & 0 & 2\\
\hline
\end{tabular}\\

\vspace{2pc}

\begin{tabular}{|c||c|c|c|c|c|}
\hline
& $q_3^*$=$-12$ & $q_2^*$=$5$ & $q_1^*$=$-2$ & $q_0^*$=$1$\\ 
Z & $b_4$ & $b_3$ & $b_2$ & $b_1$\\
\hline\hline
-1 & 0 & 0 & 1 & 1\\
\hline
-2  & 0 & 0 & 1 & 0\\
\hline
-3  & 0 & 0 & 2 & 1\\
\hline
-4  & 0 & 0 & 2 & 0\\
\hline
-5  & 1 & 1 & 0 & 2\\
\hline
-6  & 1 & 1 & 0 & 1\\
\hline
-7  & 1 & 1 & 0 & 0\\
\hline
-8  & 1 & 1 & 1 & 1\\
\hline
-9  & 1 & 1 & 1 & 0\\
\hline
-10  & 1 & 0 & 0 & 2\\
\hline
-11  & 1 & 0 & 0 & 1\\
\hline
-12  & 1 & 0 & 0 & 0\\
\hline
\end{tabular}\hspace{3.6pc}\begin{tabular}{|c||c|c|c|c|}
\hline
& $q_3^*$=$-12$ & $q_2^*$=$5$ & $q_1^*$=$-2$ & $q_0^*$=$1$\\
Z&  $b_4$ & $b_3$ & $b_2$ & $b_1$\\
\hline\hline
-13  & 1 & 0 & 1 & 1\\
\hline
-14 & 1 & 0 & 1 & 0 \\
\hline
-15  & 1 &0 & 2 & 1\\
\hline
-16  &1 & 0 & 2 & 0\\
\hline
-17  &2 & 1 & 0 & 2\\
\hline
-18 & 2 & 1 & 0 & 1\\
\hline
-19 &2 & 1 & 0 & 0 \\
\hline
-20 &2 & 1 & 1 & 1\\
\hline
-21 &2 & 1 & 0 & 2\\
\hline
-22 &2 & 0 & 0 & 2\\
\hline
-23 &2 & 0 & 0 & 1\\
\hline
-24 &  2 & 0 & 0 & 0\\
\hline
\end{tabular}
\normalsize

\section{Appendix -- Mathematica Implementation} 

We use Mathematica$^{\operatorname{TM}}$ to implement the algorithm \ref{abs} and \ref{alt} with the base whose first continued fraction partial quotients are $(a_k := k)_1^9$. The vectors $\bf{b}$ and $\bf{c}$ start at position 1 and the vectors $\bf{q}$ and  $\bf{q}^*$ start in positions $1$ so that we obtain the dot product representation 
\[N = \bf{c}\cdot\bf{q} = \operatorname{Ost}(N)\cdot\bf{q}\]
and  
\[Z = \bf{b}\cdot\bf{q}^* = \operatorname{AltOst}(Z)\cdot\bf{q}^*.\]  

\noindent\includegraphics[scale=.8]{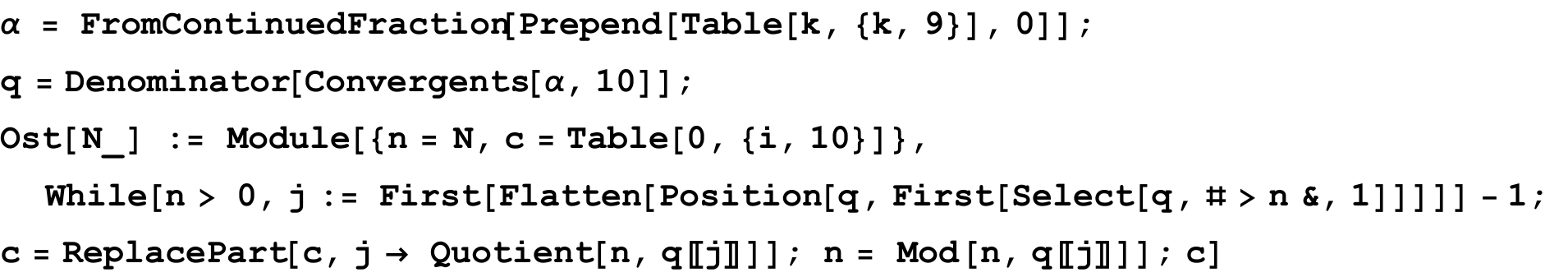}\\

\noindent\includegraphics[scale=.8]{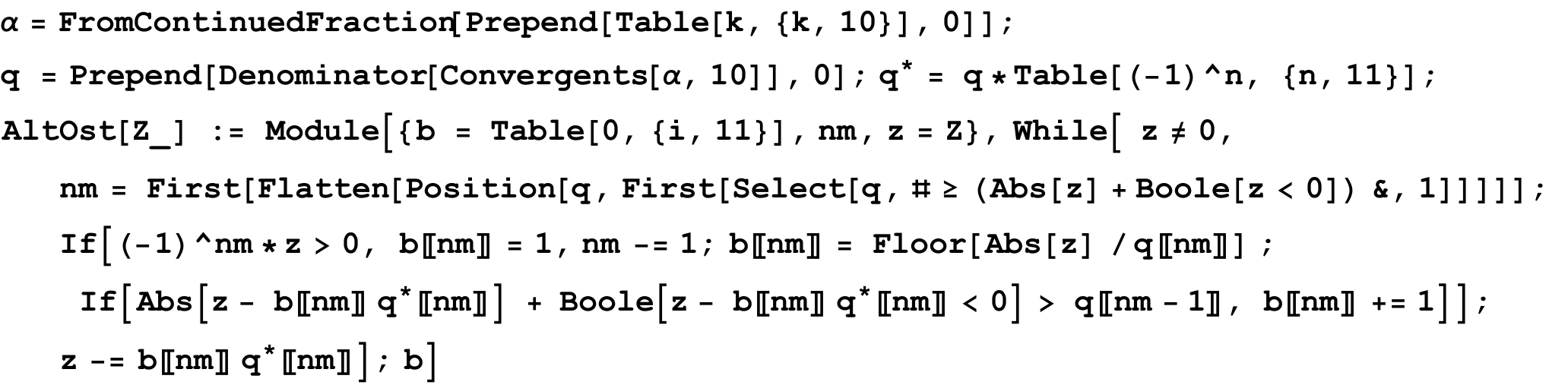}\\

\section{Acknowledgments}

\noindent This work could have not been completed without the guidance, encouragement and good company of Robbie Robinson from George Washington University.

\end{document}